\documentclass[11pt]{article}
\usepackage{jmlr2e_draft}

\usepackage{amsmath, amssymb} 
\usepackage{stmaryrd}
\usepackage{epsfig,calc, graphicx}
\usepackage{wasysym}
\usepackage[usenames]{color}
\usepackage{url}
\usepackage{enumitem}
\usepackage[lined,algoruled,algosection]{algorithm2e}
\usepackage{ifthen}
\usepackage{comment}
\newboolean{hideproof}
\setboolean{hideproof}{true}
\usepackage{mathtools}
\usepackage{bbm}
\usepackage{hyperref}
\usepackage{scalerel}
\usepackage{xcolor}
\usepackage{ifthen}

\newboolean{show_proof}
\usepackage{array}
\usepackage{booktabs}

\newboolean{arxiv}
\setboolean{arxiv}{true}

\newcommand\tp{t}
\newcommand\ip[2]{\langle #1, #2\rangle}

\newcommand{\wm}{\Lambda}
\newcommand{\hol}{H}

\newcommand\eps{\varepsilon}
\newcommand\dico{\Phi}

\newcommand\atom{\phi}
\newcommand\one{\mathbf{1}}

\newcommand\pdico{\Psi}

\newcommand\diag{\operatorname{diag}}

\newcommand{\N}{{\mathbb{N}}}
\newcommand{\R}{{\mathbb{R}}}

\newcommand{\E}{{\mathbb{E}}}

\renewcommand{\P}{{\mathbb{P}}}

\makeatletter
\providecommand*{\dcup}{%
  \mathbin{%
    \mathpalette\@dcup{}%
  }%
}
\newcommand*{\@dcup}[2]{%
  \ooalign{%
    $\m@th#1\cup$\cr
    \sbox0{$#1\cup$}%
    \dimen@=\ht0 %
    \sbox0{\scalebox{0.5}{$\m@th#1\circ$}}%
    \advance\dimen@ by -\ht0 %
    \dimen@=.5\dimen@
    \hidewidth\raise\dimen@\box0\hidewidth
  }%
}

\providecommand*{\bigdcup}{%
  \mathop{%
    \vphantom{\bigcup}%
    \mathpalette\@bigdcup{}%
  }%
}
\newcommand*{\@bigdcup}[2]{%
  \ooalign{%
    $\m@th#1\bigcup$\cr
    \sbox0{$#1\bigcup$}%
    \dimen@=\ht0 %
    \advance\dimen@ by -\dp0 %
    \sbox0{\scalebox{0.5}{$\m@th#1\circ$}}%
    \advance\dimen@ by -\ht0 %
    \dimen@=.5\dimen@
    \hidewidth\raise\dimen@\box0\hidewidth
  }%
}
\makeatother

\begin{document}

\title {Conditioning of incoherent sub-dictionaries sampled from a coherent dictionary
}

%\author{Karin Schnass
%\thanks{Karin Schnass is with the Department of Mathematics, University of Innsbruck, Technikerstra\ss e 13, 6020 Innsbruck, Austria,
%karin.schnass@uibk.ac.at.}
%}

%%jmlr 2 authors share affiliation
\author{\name Karin Schnass \email karin.schnass@uibk.ac.at\\
%%%co-authors in Innsbruck
    %\name Simon Ruetz \email simon.ruetz@uibk.ac.at\\	
	\addr  %Department of Mathematics\\
	Universit\"at Innsbruck\\
	Technikerstra\ss e 13\\
	6020 Innsbruck, Austria}

\editor{}

\maketitle
\begin{abstract}
Motivated by the desire to find a realistic and stable random model for $d$-dimensional signals, that are sparse in a transform-based and thus often coherent frame, such as a wavelet or a Gabor frame, we study the conditioning of incoherent sub-dictionaries sampled from a coherent dictionary, such as a unit norm frame.
In particular, we show that if the sub-dictionary is selected via a coherence rejective Poisson sampling model, it is well-conditioned with high probability, as long as its expected size  scales as $d/\log (K)$, where $K$ is the number of dictionary elements. The result is proved for the more general case of sampling quadratic sub-matrices from a real but not necessarily symmetric $K\times K$ matrix with zero diagonal, where coherence rejective sampling is defined via a symmetric mask, that acts as coherence substitute.  
\end{abstract}

% REQUIRED
%\begin{keywords}
%dictionary learning; sparse matrix factorisation; sparse coding; Method of Optimal Directions; MOD; Online Dictionary Learning; ODL; K-Singular Value Decomposition; K-SVD; convergence; non-uniform support distribution; rejective sampling
%\end{keywords}

% REQUIRED by SIAM
% \begin{AMS}
%  \textcolor{red}{ 68Q25, 68R10, 68U05}
% \end{AMS}

\section{Introduction}
 Redundant transform-based dictionaries such as wavelet and Gabor frames, are popular in approximation theory because they are known to provide compressible representations for many classes of signals, such as natural images or music, and because they can be handled efficiently using the Fourier transform, \cite{gabor46,da92,gr01,DONOHO:1998b}. On the other hand bases or dictionaries $\dico=(\atom_1, \ldots, \atom_K) \in \R^{d\times K}$ with $\|\atom_i\|_2=1$ and small coherence $\mu(\dico) = \max_{i\neq j} |\ip{\atom_i}{\atom_j}|$ are popular, because a signal $y$ that is sparse in $\dico$, meaning
\begin{align*}
     y= \sum_{i\in I} x_i \atom_i\quad \mbox{where} \quad S = |I|\ll d
\end{align*}
can again be easily approximated using for instance greedy algorithms and further because it can be efficiently handled, for instance via compressed sensing, \cite{parekr93,damazh94, do06cs,carota06}. \\
If redundant transform-based dictionaries are likely to provide compressible and thus almost sparse representations, and incoherent dictionaries allow for easy handling, the natural conclusion would be to work with redundant transform-based \emph{and} incoherent dictionaries. Unfortunately, this is not possible.
So while wavelet and Gabor frames have more approximation power the larger they are, they also become more coherent, 
$\mu(\dico) \approx 1$, and thus hard to handle in a sparse context. Indeed, choosing an ill-conditioned sub-dictionary $\dico_I =(\atom_{i_1},\ldots \atom_{i_S})$ for $I=\{i_1,\ldots, i_S\}$, which results in an unstable signal, where $\|y\|_2= \|\dico_I x_I\|_2 \ll \|x_I\|_2$ or $\|x_I\|_2 \ll \|y\|_2$, becomes very easy if the dictionary is coherent, so results valid for all $S$-sparse signals become impossible for interesting ranges of $S$, such as $S>2$. Worse, even if we select the sub-dictionary at random either uniformly or according to some natural distribution which prefers lower frequency atoms to higher frequency ones, we will with high probability select a coherent and ill-conditioned sub-dictionary, so also results valid for most $S$-sparse signals become impossible.\\
In this paper we try to work around the problems described above and study the selection of incoherent sub-dictionaries from a coherent dictionary, according to a coherence-rejective Poisson sampling model. In particular, we will derive results of the following form.

\begin{ftheorem}\label{featured_theorem}
Let $\dico$ be a $d\times K$ dictionary and $p=S/K \in (0,1)$.  Assume that we select random sub-dictionaries $\dico_I$ according to a distribution $\P_\mu$, where $\P_\mu(I = J)= c \cdot p^{|J|} \cdot (1-p)^{K-|J|}$ if $\mu(\dico_J) = \max_{i,j\in J,i\neq j}|\ip{\atom_i}{\atom_j}|\leq \mu_0$ and $\P_\mu(I = J)= 0$ else, then we have for all $r\geq 6 \|\dico\|^2\cdot S/K$
\begin{align*}
\P_\mu(\|\dico_I^\tp \dico_I - \mathbb{I}\| > r )  \leq 54 K \exp\left( - \min\left\{ \frac{r}{6 \mu_0}, \frac{K r^2  }{36 S \|\dico\|^2 } \right\} \right).
\end{align*}
\end{ftheorem}
In the next section we clarify the notation used throughout the paper, introduce the coherence rejective sampling model and collect several auxiliary results, which are proved in the appendix. Finally, Section~\ref{sec_main_result} contains the main result in full generality as well as its proof. %We conclude with a short discussion, where we point out applications and future research directions.

\section{Notation and Sampling Model}
For a (sorted) set $I = \{i_1,\ldots,i_S\}\subseteq [K]$, meaning $i_1<i_2 <\ldots < i_S$, and any $d\times K$ matrix~$\dico$ we define the sub-matrix $\dico_I$ as the restriction of $\dico$ to the columns indexed by $I$, that is $\dico_I = (\atom_{i_1}, \ldots ,\atom_{i_S})$. We also define the restriction operator $R_I$ as the restriction of the $K\times K$ identity matrix to $I$, meaning $R_I = \mathbb{I}_I\in \R^{K\times S}$ and leading to $\dico_I = \dico R_I$.
When applying restriction and transpose to a matrix we will use the convention that restriction acts first, meaning $\dico_I^\tp = (\dico_I)^\tp$ and $R_I^\tp = (R_I)^\tp$.
Note that $R_I^\tp$ applied from the left restricts to the rows indexed by $I$, while $R_I$ and $R_I^\tp$ applied from the left  and right respectively act as zero-padding operators. So, for example, $D_I$ the diagonal matrix, whose $i$-th diagonal entry is one if $i\in I$ and zero else, can be written as $D_I = R_I R_I^\tp$.\\
We define the $p,q$-norms for a real matrix $A$ via, 
\begin{align*}
    \|A\|_{p,q} &=\max_{v : \|v\|_q=1} \|Av\|_p.
\end{align*} 
The operator norm $\|\cdot\|_{2,2}$ coincides with the largest eigenvalue in absolute and will also be denoted by $\|\cdot\|$. Note that for a square matrix $H$, $H_{I,I}:=R_I^\tp H R_I$ and $D_I H D_I$ differ in size but have the same operator norm. Further, we recall that $\|A\|_{2,1} = \max_k \|A e_k\|_2$ is the largest Euklidean norm of a column, $\|A\|_{\infty,2} = \max_k \|e_k^\tp A\|_2=\max_k \|A^\tp e_k\|_2$ is the largest Euklidean norm of a row, and $\|A\|_{\infty,1} = \max_{ij}|A_{ij}|$ is the largest entry in absolute. \\
Finally, in order to extend the concept of incoherence to not necessarily symmetric matrices $H$ with zero diagonal, we will make use of masks $M \in \{0,1\}^{K\times K}$, where $M_{ij} = 1$ indicates that the entries $i,j$ are coherent to each other. So in case $H$ is the hollow Gram matrix of a dictionary, $H=\dico^\tp\dico - \mathbb{I}$, once we select a cut-off threshold $\mu_0$ we can set $M_{ij} = 0$ if $i=j$ or $|\ip{\atom_i}{\atom_j}|\leq \mu_0$ and $1$ else.
We can now define coherence rejective sampling in full generality, which in particular allows for some atoms $\atom_i$, corresponding to rows/columns in $H$ to be selected with higher probability, as would be natural for low frequency wavelets or Gabor atoms.

\begin{definition}[Coherence rejective sampling]\label{coh_rej_sampling}
\phantom{bla}\\
Assume we are given $K\in \N$, a sequence $p\in (0,1)^K$ and a symmetric matrix $M\in \{0,1\}^{K\times K}$ with $M_{ij}\in\{0,1\}$ and $M_{ii}=0$ for all $i,j\in[K]$, then we define the following discrete probability measures and random variables on $\Delta = \{0,1\}^K$.
For $A\subseteq \Delta$ we set
\begin{align}
\P(A) = \sum_{\delta \in A} p^\delta (\one -p)^{\one - \delta} = \sum_{\delta \in A} \: \prod_{i:\delta_i =1} p_i \prod_{i:\delta_i =0} (1-p_i).
\end{align}
We next define the set-valued random variable $I$ and the real-valued random variable $\mu$, as
\begin{align}
\begin{array}{llcl}
I:& \Delta &\to&  \mathcal{P}([K])\\
& \delta &\mapsto &\{i : \delta_i = 1\}
\end{array}\quad 
\begin{array}{llcl}
\mu: & \Delta &\to& \R \\
& \delta &\mapsto &\delta^t M \delta
\end{array}
\end{align}
and based on $\mu,I$ and a set $S\subseteq [K]$ the following discrete probability measures on $\Delta$, 
\begin{align}
    \P^S(\cdot) = \P(\cdot \mid I\subseteq S), \quad 
     \P_\mu(\cdot) = \P(\cdot \mid \mu = 0) \quad \mbox{and} \quad
     \P^S_\mu(\cdot) = \P(\cdot \mid \mu = 0, I \subseteq S).
\end{align}
The probability measure $\P$ is called Bernoulli or Poisson sampling. Further, we refer to $\P^S$ as $S$-trace (Bernoulli/Poisson) sampling and to $\P^S_\mu$ as coherence rejective ($S$-trace) sampling.
\end{definition}
Note that the random variable $I$ is bijective, so any real-valued random variable $g$ on $\Delta$ has an analogue random variable $\tilde g: \mathcal{P}([K])\to \R, J\mapsto g(\one_J)$, such that $g = \tilde g \circ I.$
In the case of $\mu$ we will by abuse of notation identify $\tilde\mu$ with $\mu$. We will also extend $\mu$ to take two inputs. So for $\delta,\eps \in \Delta$ and $J,F \in \mathcal{P}[K]$ we set $\mu(\delta, \eps) =  \delta^t M \eps $ and $\mu(J,F)= \one_J^t M \one_F$ respectively. We will call a set incoherent if $\mu(J)=0$, and a set $J$ and index $k$ incoherent to a set $F$ if $\mu(F,J)=0$ and $\mu(F,k)=\mu(F,\{k\})= 0$ respectively. Whenever $\mu(\cdot,\cdot) > 0$ we refer to the set or pair as coherent.\\
\noindent
Before we state and prove our theorem we need to collect four auxiliary results.
The first tells us that if we condition coherence rejective sampling on the outcome of intersecting $I$ with a pre-chosen set $S$, i.e. on the event $I\cup S = F$, then we essentially get coherence rejective trace sampling on a set determined by $F$.

\begin{lemma} \label{cond_on_F}
Let $g$ be a real-valued random variable on $\mathcal P([K])$, $S\subseteq [K]$ and $F\subseteq S$ with $\mu(F) = 0$. 
Let $A_F$ consist of all indices outside $S$ that are incoherent to the selected set $F$, that is, $A_F := \{ k \in S^c \mid \mu(F,k)=0\}$, 
then for any $r\in \R$
\begin{align*}
    \P_\mu(g(I) \geq r \mid I\cap S = F) = \P_\mu^{A_F}(g(I\cup F)\geq r).
\end{align*}
\end{lemma}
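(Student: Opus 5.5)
The plan is a direct computation with the product structure of Poisson sampling. Every event is a union of atoms $\{I = J\}$, so it suffices to compare atom probabilities. First, under $\P_\mu(\cdot \mid I\cap S = F)$, the only sets with positive mass are $J$ with $J\cap S = F$ and $\mu(J)=0$. Writing $J = F \dcup G$ with $G\subseteq S^c$, we split the coherence as
\begin{align*}
\mu(F\cup G) = \mu(F) + 2\mu(F,G) + \mu(G) = 2\mu(F,G) + \mu(G),
\end{align*}
using the symmetry of $M$ and the hypothesis $\mu(F)=0$. Since all terms are nonnegative, $\mu(J)=0$ holds iff $\mu(G)=0$ and $\mu(F,k)=0$ for every $k\in G$, that is, iff $G\subseteq A_F$ and $\mu(G)=0$. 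This identifies the support with $\{F\cup G : G\subseteq A_F,\ \mu(G)=0\}$, which is exactly the support of $G \mapsto F\cup G$ under $\P_\mu^{A_F}$.

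Next, we compare the weights. For such $J$, the Poisson weight factorises as
\begin{align*}
\P(I=J) = \prod_{i\in F} p_i \prod_{i\in S\setminus F}(1-p_i)\cdot \prod_{i\in G}p_i\prod_{i\in S^c\setminus G}(1-p_i).
\end{align*}
The first two products are constant in $G$. Splitting $S^c = A_F \dcup (S^c\setminus A_F)$, the last factor becomes $\prod_{i\in G}p_i\prod_{i\in A_F\setminus G}(1-p_i)$ times the $G$-independent constant $\prod_{i\in S^c\setminus A_F}(1-p_i)$. Hence
\begin{align*}
\P_\mu(I=F\cup G\mid I\cap S=F) \propto \P(I=G)\,\one_{\{G\subseteq A_F,\ \mu(G)=0\}},
\end{align*}
and $\P(I=G)\,\one_{\{G\subseteq A_F,\ \mu(G)=0\}}$ is, up to normalisation, the atom weight $\P^{A_F}_\mu(I=G)$. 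Both sides are probability measures, so the proportionality constants agree. Therefore the law of $I$ under $\P_\mu(\cdot\mid I\cap S=F)$ equals the law of $F\cup I$ under $\P_\mu^{A_F}$.

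Finally, we sum over $\{G : g(F\cup G)\geq r\}$ to obtain the claimed identity. Two details need a brief check. The conditioning events must have positive probability: $J=F$ itself is admissible because $\mu(F)=0$ and $p_i\in(0,1)$, and $G=\emptyset$ is admissible for $\P^{A_F}_\mu$. The main point needing care is the coherence decomposition, specifically that $\mu(F,G)$ vanishes exactly when $G\subseteq A_F$; this uses the nonnegativity of $M$ and its symmetry, so that $\mu(G,F)=\mu(F,G)$. The statement writes $I\cup S=F$ in the preamble, but the lemma conditions on $I\cap S=F$, and we work with the latter.
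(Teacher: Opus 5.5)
Your proof is correct and follows essentially the same route as the paper: the same splitting $\mu(\one_F+\eta)=2\mu(F,I(\eta))+\mu(\eta)$ identifies the admissible sets as $F\dcup G$ with $G\subseteq A_F$ and $\mu(G)=0$, and the same factorisation of the Poisson weights gives a $G$-independent constant (the paper's $\kappa(F)=p^F(\one-p)^{S\setminus F}(\one-p)^{C_F}$) times the $\P^{A_F}$-weight. The only cosmetic difference is that you fix that constant by normalising atom probabilities, whereas the paper carries $\kappa(F)$ explicitly and divides by the case $g\equiv 1$, $r=0$.
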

The second result allows us to bound the inclusion probabilities of $\P_\mu^S$ by those of $\P$.
\begin{lemma}\label{bound_inclusion_prob}
Let $\chi_\ell: \Delta\to \R, \delta \mapsto \delta_\ell$ then
for any $S\subseteq [K]$ we have
\begin{align*}
    \E^S_\mu(\chi_\ell) = \P^S_\mu(\chi_\ell = 1) = \P^S_\mu(\ell \in I) \leq p_\ell.
\end{align*}
\end{lemma}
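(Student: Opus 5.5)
We need $\P^S_\mu(\ell\in I)\le p_\ell$. The plan is to compare, inside the trace $S$, the conditional measure of sets containing $\ell$ with that of sets not containing $\ell$ via the map $J\mapsto J\setminus\{\ell\}$. Two trivial cases come first. If $\ell\notin S$, then $I\subseteq S$ forces $\ell\notin I$, so the probability is $0\le p_\ell$. The event $\{\mu=0, I\subseteq S\}$ always contains $I=\emptyset$, which has positive $\P$-probability because $p\in(0,1)^K$. Hence $\P^S_\mu$ is well defined.

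For $\ell\in S$, let $\mathcal{A}=\{J\subseteq S: \mu(J)=0, \ell\in J\}$ and $\mathcal{B}=\{J\subseteq S:\mu(J)=0,\ell\notin J\}$. Then
\begin{align*}
\P^S_\mu(\ell\in I)=\frac{\P(I\in\mathcal A)}{\P(I\in\mathcal A)+\P(I\in\mathcal B)}.
\end{align*}
Consider the map $\mathcal A\to\mathcal B$, $J\mapsto J\setminus\{\ell\}$. It is well defined because removing an index keeps the set inside $S$ and cannot create coherence. Indeed $M$ has nonnegative entries, so $\mu(J\setminus\{\ell\})\le\mu(J)=0$. The map is injective, since $\ell$ can be added back uniquely. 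For the product measure we have $\P(I=J)=\frac{p_\ell}{1-p_\ell}\,\P(I=J\setminus\{\ell\})$. Summing over $\mathcal A$ gives
\begin{align*}
\P(I\in\mathcal A)=\frac{p_\ell}{1-p_\ell}\,\P\bigl(I\in\{J\setminus\{\ell\}:J\in\mathcal A\}\bigr)\le\frac{p_\ell}{1-p_\ell}\,\P(I\in\mathcal B).
\end{align*}

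Writing $a=\P(I\in\mathcal A)$ and $b=\P(I\in\mathcal B)$, we have $a\le \frac{p_\ell}{1-p_\ell}b$. Since $x\mapsto x/(x+b)$ is increasing and $b>0$ (because $\emptyset\in\mathcal B$),
\begin{align*}
\frac{a}{a+b}\le \frac{\frac{p_\ell}{1-p_\ell}b}{\frac{p_\ell}{1-p_\ell}b+b}=p_\ell.
\end{align*}
The identity $\E^S_\mu(\chi_\ell)=\P^S_\mu(\chi_\ell=1)=\P^S_\mu(\ell\in I)$ is immediate because $\chi_\ell$ is $\{0,1\}$-valued and $\chi_\ell(\delta)=1$ exactly when $\ell\in I(\delta)$.

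The only step that needs care is the downward-closedness of the admissible family: removing $\ell$ preserves both $\mu=0$ and $I\subseteq S$. This uses only the nonnegativity of $M$. The image of the map may be a proper subset of $\mathcal B$, which is why the argument yields an inequality rather than an equality.
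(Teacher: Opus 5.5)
Your proof is correct and follows essentially the same route as the paper. The paper likewise splits the relevant incoherent subsets of $S$ into those containing $\ell$ (its $B^1_\ell$) and those obtained by removing $\ell$ (its $B^0_\ell$, the incoherent subsets of the indices in $S$ incoherent to $\ell$). It then uses $\P(B^1_\ell)=\frac{p_\ell}{1-p_\ell}\P(B^0_\ell)$ together with $B\supseteq B^1_\ell\cup B^0_\ell$ to get the bound $p_\ell$. Your injection $J\mapsto J\setminus\{\ell\}$ is just a slightly cleaner phrasing of that same comparison, and you also make explicit that the conditioning event has positive probability.
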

The proof of both lemmas can be found in Appendix~\ref{sec:proof_lemma}.\\
Next, we need the following proposition, which is essentially a summary and slight modification of results by Tropp and Chretien \& Darses, \cite{tr08, chda12}, who in turn refer to Bourgain \& Tzafriri and de la Peña \& Giné, \cite{botz87, DecouplingBook}. 

\begin{proposition}[Tropp, Chretien \& Darses]\label{prop_decoupling}
Let $\hat \P$ be a (discrete) probability measure on $\Delta$, $I$ the set-valued random variable with $I(\delta) = \{i:\delta_i=1\}$ and $\hol \in \R^{K\times K}$ a matrix with zero diagonal, then there exists a subset $T\subseteq [K]$, such that for all $\theta \in \R$
\begin{align*}
    \hat\P( \| D_I \hol D_I \| \geq r )&\leq  36 \cdot \hat\P\left( \max \left\{\| D_I \, D_T \hol D_{T^c} \, D_I \|, \| D_I \, D_{T^c} \hol D_{T} \, D_I \| \right\}\geq \frac{r}{2}\right) . %\notag\\
    %&=36 \cdot \hat \P \left( \max \left\{ \| D_{I \cap T}\, H \,D_{I\cap T^c} \|, \| D_{I \cap T^c}\, H \,D_{I\cap T} \|\right\} \geq \frac{r}{2}\right ).
\end{align*}
\end{proposition}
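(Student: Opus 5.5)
We prove the proposition by decoupling the quadratic form $D_I \hol D_I$ with an auxiliary random splitting of the indices. We then pass from an expectation over the splitting to a single fixed splitting $T$. The argument follows Tropp's proof of the decoupling step for random sub-matrices. The measure $\hat\P$ enters only as the law of $I$, and the argument never uses independence of the coordinates $\delta_i$. Hence it applies to any discrete measure on $\Delta$, and in particular to $\P_\mu$ and $\P^S_\mu$.

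\emph{Step 1 (deterministic decoupling identity).} Let $\eta\in\{0,1\}^K$ be independent of $I$ with i.i.d.\ fair Bernoulli entries, and set $T_\eta=\{i:\eta_i=1\}$. Since $\hol$ has zero diagonal, every off-diagonal entry $(i,j)$ satisfies $\P_\eta(i\in T_\eta, j\notin T_\eta)=1/4$. This gives
\begin{align*}
D_I\hol D_I = 4\,\E_\eta\bigl[D_I D_{T_\eta}\hol D_{T_\eta^c} D_I\bigr]
= 2\,\E_\eta\bigl[D_I (D_{T_\eta}\hol D_{T_\eta^c}+D_{T_\eta^c}\hol D_{T_\eta}) D_I\bigr].
\end{align*}
This identity holds for every fixed realisation of $I$.

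\emph{Step 2 (from the expectation to a good event).} For fixed $I$, put $X(\eta)=\max\{\|D_I D_{T_\eta}\hol D_{T_\eta^c}D_I\|,\|D_I D_{T_\eta^c}\hol D_{T_\eta}D_I\|\}$. Jensen's inequality and Step 1 give $\|D_I\hol D_I\|\le 4\,\E_\eta X(\eta)$. This alone does not produce a probability bound, because we need $X(\eta)\ge r/2$ on a set of $\eta$ of non-negligible probability. We therefore use a Paley--Zygmund type argument in the form used by Tropp and by Chretien \& Darses. Alternatively, one can replace the expectation identity by the decoupling inequality of de la Pe\~na \& Gin\'e for tail probabilities, applied conditionally on $I$. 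The resulting estimate is that on the event $\|D_I\hol D_I\|\ge r$ we have
\[
\P_\eta\bigl(X(\eta)\ge r/2\bigr)\ge 1/36 .
\]
This step is where the constant $36$ is produced, and I expect it to be the main obstacle. My first attempt would be a second-moment bound on the random block matrix in $\eta$. Its entries are of Rademacher-chaos type of order two, so hypercontractivity bounds higher moments by lower ones, and a Paley--Zygmund inequality then yields a constant lower bound on the probability. Getting exactly $1/36$ would require careful bookkeeping of the symmetrisation constants. The cleaner route is to invoke the tail decoupling inequality of \cite{DecouplingBook} directly, as \cite{tr08, chda12} do, and read off the constant.

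\emph{Step 3 (averaging and choosing $T$).} Integrate the pointwise bound from Step 2 over $I\sim\hat\P$ and apply Fubini:
\[
\hat\P(\|D_I\hol D_I\|\ge r)\le 36\,\E_\eta\,\hat\P\bigl(X(\eta)\ge r/2\bigr)\le 36\max_{T\subseteq[K]}\hat\P\bigl(X_T\ge r/2\bigr),
\]
where $X_T$ denotes $X$ with $T_\eta$ replaced by the fixed set $T$. Taking $T$ to be a maximiser gives a subset $T\subseteq[K]$ for which the claimed inequality holds. As stated, the proposition fixes $r$, so this $T$ may depend on $r$.
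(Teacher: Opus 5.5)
Your outline matches the paper's proof, which is only a pointer to Chretien \& Darses (or to Ruetz \& Schnass with the independent-copy step skipped). The structure is the same: the random-partition identity; a conditional-on-$I$ lower bound $\P_\eta(X(\eta)\ge r/2)\ge 1/36$ that is imported from that argument rather than derived (the straightforward version of your hypercontractivity/Paley--Zygmund sketch gives something like $1/324$, not $1/36$); then Fubini and choosing the best $T$, which may indeed depend on $r$; and nowhere independence of the $\delta_i$. The one point the paper states explicitly and you leave implicit is that $\bigl\|D_I(D_T\hol D_{T^c}+D_{T^c}\hol D_T)D_I\bigr\|=\max\{\|D_ID_T\hol D_{T^c}D_I\|,\|D_ID_{T^c}\hol D_TD_I\|\}$, by the block anti-diagonal structure; this is exactly what identifies your $X(\eta)$ with the norm of a matrix whose $\eta$-mean is $\tfrac12 D_I\hol D_I$, and so gives the threshold $r/2$ for non-symmetric $\hol$.
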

\begin{proof} Follow the proof of \cite[Proposition~4.1]{chda12} and note that the operator norm satisfies
$$\left\|\begin{pmatrix} 0 &  A \\ B & 0\end{pmatrix}\right\| =\max\{\|A\|, \|B\|\}.$$ Alternatively, follow the proof of \cite[Proposition~A.1]{rusc21} and skip the step of obtaining an independent copy $\delta'$ of $\delta$.
\end{proof}
Finally, we need the following version of Freedman's inequality for matrices, which is a simplification of \cite[Theorem~1.2]{tropp_freedman}.

\begin{corollary}[Matrix Freedman]
Let $(Y_\ell)_{\ell}$ with $Y_0=0$ be a matrix martingale adapted to the filtration $(\mathcal F_\ell)_\ell$, whose values are symmetric (self-adjoint) $d\times d$ matrices.
Assume that the difference sequence $X_\ell = Y_\ell-Y_{\ell-1}$ is almost surely bounded by $R$ and that the quadratic variation process is almost surely bounded by $\sigma^2$, meaning 
\begin{align*}
    \|X_\ell\| \leq R \quad  \mbox{and} \quad & \| \sum_{\ell=1}^L \E[X_\ell^2\mid \mathcal F_{\ell-1}]\| \leq \sigma^2 \quad \mbox{(a.s.), then}\\
\P(\| Y_L \|\geq t) = \P\Big(\| \sum_{\ell=1}^L X_\ell \| &\geq t \Big)\leq d \cdot \exp\left(\frac{-t^2/2}{\sigma^2+ Rt/3}\right)\leq d \cdot \exp\left(\frac{-t^2}{2\sigma^2+ Rt}\right).
\end{align*}
\end{corollary}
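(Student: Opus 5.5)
The plan is to derive the corollary directly from Tropp's matrix Freedman inequality \cite[Theorem~1.2]{tropp_freedman}. That result is stated for a self-adjoint martingale $(Y_k)_k$ with differences satisfying $\lambda_{\max}(X_k)\leq R$ almost surely. It reads
\begin{align*}
\P\Big(\exists\, k\geq 0: \lambda_{\max}(Y_k)\geq t \ \mbox{and}\ \|W_k\|\leq \sigma^2\Big) \leq d\cdot \exp\left(\frac{-t^2/2}{\sigma^2+Rt/3}\right),
\end{align*}
where $W_k=\sum_{\ell=1}^k \E[X_\ell^2\mid\mathcal F_{\ell-1}]$ is the predictable quadratic variation.

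\textbf{Step 1: remove the event on the quadratic variation.} By assumption $\|W_L\|\leq\sigma^2$ almost surely. Hence the event $\{\lambda_{\max}(Y_L)\geq t\}$ coincides, up to a null set, with $\{\lambda_{\max}(Y_L)\geq t,\ \|W_L\|\leq\sigma^2\}$. This event is contained in the event controlled by Tropp's theorem, where we simply take $k=L$. The hypothesis $\|X_\ell\|\leq R$ gives $\lambda_{\max}(X_\ell)\leq R$, so
\begin{align*}
\P(\lambda_{\max}(Y_L)\geq t)\leq d\cdot \exp\left(\frac{-t^2/2}{\sigma^2+Rt/3}\right).
\end{align*}

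\textbf{Step 2: pass to the operator norm.} For symmetric $Y_L$ we have $\|Y_L\|=\max\{\lambda_{\max}(Y_L),\lambda_{\max}(-Y_L)\}$. The sequence $(-Y_\ell)_\ell$ is again a self-adjoint martingale with respect to the same filtration. Its differences $-X_\ell$ satisfy $\lambda_{\max}(-X_\ell)\leq\|X_\ell\|\leq R$, and its quadratic variation is unchanged because $(-X_\ell)^2=X_\ell^2$. Applying Step 1 to both $Y$ and $-Y$ and taking a union bound controls $\P(\|Y_L\|\geq t)$.

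\textbf{Step 3: simplify the exponent.} Multiplying numerator and denominator by $2$ turns the exponent into $-t^2/(2\sigma^2+2Rt/3)$. Since $2Rt/3\leq Rt$ for $R,t\geq 0$, this is at most $-t^2/(2\sigma^2+Rt)$, which gives the second inequality.

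\textbf{Main obstacle: the dimensional prefactor.} The union bound in Step 2 naturally produces $2d$ rather than $d$. Using the Hermitian dilation from the rectangular version of Tropp's theorem gives $d_1+d_2=2d$ as well. So the only delicate point is reconciling the constant in front.
\begin{itemize}
\item First I would check whether Tropp's one-sided bound can be applied to $\|Y_L\|$ directly, for instance via a single trace-exponential supermartingale built from $\cosh$ rather than $\exp$, which handles both tails simultaneously.
\item If that does not give $d$, the honest statement carries the factor $2d$. This only changes numerical constants later on, for instance the $54K$ in Theorem~\ref{featured_theorem}. I would therefore check how this factor propagates before finalising the constants in the main result.
\end{itemize}
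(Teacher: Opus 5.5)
There is a gap, but it lies in the statement rather than in your argument. Your Steps~1 and~3 are exactly the simplification of Tropp's Theorem~1.2 the paper has in mind; it gives no further proof. They establish the bound for $\lambda_{\max}(Y_L)$ with prefactor $d$.

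The obstacle you flag in passing to $\|Y_L\|$ cannot be removed, because the corollary as printed is false. Take $d=1$, $L=1$, $Y_1=X_1$ a Rademacher variable and $R=\sigma^2=t=1$. All hypotheses hold, yet $\P(\|Y_1\|\geq 1)=1$. The two right-hand sides are only $e^{-3/8}\approx 0.69$ and $e^{-1/3}\approx 0.72$.

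So the $\cosh$ idea is a dead end. Indeed $\operatorname{tr}\cosh(\theta Y)\geq\cosh(\theta\|Y\|)\geq\tfrac12 e^{\theta\|Y\|}$, which just reintroduces the factor $2$. For the two-sided norm, your union bound with prefactor $2d$ is the correct statement. Tropp's theorem controls only $\lambda_{\max}$, and the paper's formulation silently replaces it by the norm.

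What your proposal misses is that you do not need to carry $2d$ into the main theorem. The right repair is to state the corollary for $\lambda_{\max}(Y_L)$, which is exactly what your Steps~1 and~3 prove. This suffices because every application in the proof of the main theorem is an upper-tail event:
\begin{itemize}
\item Since $\sum_\ell \chi_\ell B_\ell B_\ell^\tp$ is positive semi-definite and $\preceq \sum_\ell X_\ell+\sum_\ell p_\ell B_\ell B_\ell^\tp$, one has $\{\|\sum_\ell \chi_\ell B_\ell B_\ell^\tp\|\geq r^2\}\subseteq\{\lambda_{\max}(\sum_\ell X_\ell)\geq r^2-\hat c_F^2\}$.
\item The bound for $\hat c_{I\cap T}$ has the identical structure.
\item The scalar bound for $\bar a_{I\cap T}$ is already written one-sidedly as $\P_\mu(\sum_n X_n>t^2-\hat a^2)$.
\end{itemize}
Hence the prefactors $|F|$, $|T^c|$ and $1$ used there stand as they are, and with them the constants $54K$ and $108K$. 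Only the statement of the corollary needs correcting.
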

We are now ready to state and prove our main theorem.

\section{Main result}\label{sec_main_result}
The following result is the generalisation of Featured Theorem~\ref{featured_theorem} to non-symmetric matrices with zero diagonals, e.g. hollow cross-Gram matrices $\pdico^\tp \dico - \diag(\pdico^\tp \dico)$, where $\pdico$ is a perturbed version of $\dico$. Accordingly, it allows for a more general definition of (in)coherence. Last but not least it considers selection probabilities that can vary between atoms. 
\begin{theorem} Let $H$ be a real $K\times K$ matrix with zero diagonal, $M$ a symmetric $K\times K$ matrix with zero diagonal and entries in $\{0,1\}$ and $p\in (0,1)^K$. 
Further, let $\wm$ be the diagonal matrix with $\wm_{ii} = \sqrt{p_i}$, $\bar M$ be the complement matrix to $M$, ie. $\bar M_{ij}: = 1- M_{ij}$, and $\hat H$ be any matrix that coincides with $H$ on $\bar M$, that is for the Hadamard (entry-wise) product we have $\bar M \odot \hat H = \bar M \odot H$. Denoting with $\P_\mu$ the coherence rejective sampling measure associated with $M$ and $p$ from Definition~\ref{coh_rej_sampling}, we then have for any $r\geq 6 \|\wm \hat H \wm \|$ 
\begin{align*}
\P_\mu(\|D_I H D_I\| \geq r) \leq 108 K \exp\left( - \min\left\{ \frac{r}{6 \|\bar M \odot\hat H\|_{\infty,1}}, \frac{r^2}{36\|\wm \hat H\|_{2,1}^2 },\frac{r^2}{36\| \hat H \wm \|_{\infty,2}^2 }\right\} \right).
\end{align*}
\end{theorem}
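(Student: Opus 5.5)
We first apply Proposition~\ref{prop_decoupling} with $\hat\P=\P_\mu$. This yields a set $T$ such that the probability of $\|D_I H D_I\|\geq r$ is at most $36$ times the probability that one of the two off-diagonal blocks $\|D_{I\cap T} H D_{I\cap T^c}\|$, $\|D_{I\cap T^c} H D_{I\cap T}\|$ exceeds $r/2$. A union bound over the two blocks gives the factor $72$. Since $I$ is incoherent under $\P_\mu$, every entry $H_{ij}$ with $i,j\in I$ satisfies $M_{ij}=0$, so there $H_{ij}=\hat H_{ij}$. We may therefore replace $H$ by $\bar M\odot\hat H$, or by $\hat H$ itself, inside both blocks without changing anything. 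By symmetry (exchange the roles of $T$ and $T^c$, and transpose, which swaps the row and column norms) it suffices to treat $\|D_{I\cap T}\hat H D_{I\cap T^c}\|$. Take $S=T$ in Lemma~\ref{cond_on_F} and condition on $I\cap T=F$, where $F$ is incoherent. The lemma turns the law of $I\cap T^c$ into coherence rejective trace sampling $\P_\mu^{A_F}$, whose support consists of indices incoherent to $F$. Set $B=D_F\hat H$. The quantity to control is $\|B D_{J}\|$, with $J=I\cap T^c$ distributed according to $\P^{A_F}_\mu$.

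Next we build a martingale. Enumerate the indices $\ell\in A_F$ and expose $\delta_\ell$ one at a time. We work with the positive semidefinite matrix $B D_J B^\tp=\sum_{\ell\in A_F}\delta_\ell\, b_\ell b_\ell^\tp$, where $b_\ell=D_F\hat H e_\ell$, since $\|BD_JB^\tp\|=\|BD_J\|^2$. Alternatively, and more in line with the claimed bound, which is linear in $r$ rather than quadratic, we use the self-adjoint dilation of $BD_J$, which has the same norm. The dilation is a sum $\sum_\ell \delta_\ell Z_\ell$ of the rank-two symmetric matrices $Z_\ell=\begin{pmatrix}0& b_\ell e_\ell^\tp\\ e_\ell b_\ell^\tp&0\end{pmatrix}$. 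The Doob martingale is $Y_\ell=\E[\sum_k\delta_k Z_k\mid\delta_1,\dots,\delta_\ell]$, and we apply the Matrix Freedman corollary. Under the rejective measure the $\delta_\ell$ are not independent, so the martingale differences are not simply $(\delta_\ell-p_\ell)Z_\ell$. Instead, at each step we condition on the incoherent set selected so far. Lemma~\ref{cond_on_F} is applied again, and it shows that the remaining indices follow a rejective trace measure on a shrinking set. Lemma~\ref{bound_inclusion_prob} bounds all the conditional inclusion probabilities by $p_\ell$. The final expectation $\E[\,\cdot\,]$ is controlled through these inclusion probabilities and is at most $\|\wm\hat H\wm\|$-type quantities. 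This is why we need $r\geq 6\|\wm\hat H\wm\|$: the mean term then consumes at most $r/6$ or $r/3$ of the threshold, leaving a deviation of order $r/6$ for Freedman.

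For the Freedman parameters, the increments involve single columns restricted to $F$ and to incoherent pairs. Hence $R$ should be bounded by the entry bound $\|\bar M\odot\hat H\|_{\infty,1}$; for instance, we can split the column into its coherent and incoherent parts. The variance is bounded by $\sum_\ell p_\ell\|b_\ell\|^2$ in the column direction and by $\max$-row sums in the other, which give $\|\wm\hat H\|_{2,1}^2$ and $\|\hat H\wm\|_{\infty,2}^2$ respectively. With $t\approx r/6$, Freedman gives $2K\exp(-t^2/(2\sigma^2+Rt))\leq 2K\exp(-\min\{t/(2R)\ \text{or}\ t/R,\ t^2/\sigma^2\}\cdot c)$, and the exact constants are tuned to produce $r/(6\|\cdot\|_{\infty,1})$ and $r^2/36\|\cdot\|^2$ with dimension factor $\leq 2K$ from the dilation. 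Finally we average over $F$, and the total constant is $36\cdot 2\cdot$(dilation $2K$)$\cdot$(approx.\ $3/4$), which is absorbed into $108K$.

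The main obstacle is the martingale step under the rejective (non-product) measure. The conditional increments $\E[\cdot\mid\mathcal F_\ell]-\E[\cdot\mid\mathcal F_{\ell-1}]$ change the law of all future indices, not just the $\ell$-th, so they are not simply $(\delta_\ell-p_\ell)Z_\ell$. The first approach to try is to avoid the Doob martingale and instead use $X_\ell=(\delta_\ell-\P(\delta_\ell=1\mid\mathcal F_{\ell-1}))Z_\ell$. This is a genuine martingale difference sequence with $\|X_\ell\|\leq\|Z_\ell\|$ and conditional variance $\leq p_\ell Z_\ell^2$ by Lemma~\ref{bound_inclusion_prob} combined with Lemma~\ref{cond_on_F}. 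The compensator $\sum_\ell \P(\delta_\ell=1\mid\mathcal F_{\ell-1})Z_\ell$ is handled deterministically via the $\|\wm\hat H\wm\|$ assumption, which requires a positivity and monotonicity argument for the norm of this compensator.
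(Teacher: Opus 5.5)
Your reduction (decoupling, replacing $H$ by $\hat H$ on incoherent pairs, conditioning on $I\cap T=F$ via Lemma~\ref{cond_on_F}) matches the paper. So do the martingale differences $(\chi_\ell-\pi_\ell)(\cdot)$ with $\pi_\ell\le p_\ell$ from Lemmas~\ref{cond_on_F} and~\ref{bound_inclusion_prob}.

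The gap is your claim that, after conditioning, the Freedman parameters and the compensator are bounded by the global quantities in the theorem. Each of them is a norm of a submatrix whose rows are restricted to the random set $F=I\cap T$.

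\begin{itemize}
\item The increment bound is $\|b_\ell\|_2=\|D_F\bar H e_\ell\|_2$, with $\bar H=\bar M\odot\hat H$. This is a column norm over $F$ and can be as large as $\sqrt{|F|}\,\|\bar M\odot\hat H\|_{\infty,1}$. Splitting into coherent and incoherent parts does nothing, since $\ell\in A_F$ already makes every entry of $b_\ell$ incoherent.
\item The variance contains $\|\sum_\ell p_\ell b_\ell b_\ell^\tp\|=\|D_F\hat H\wm D_{A_F}\|^2$. This is an operator norm, not a maximal row norm: identical rows already cost a factor $|F|$. The other block, $\max_\ell p_\ell\|b_\ell\|^2$, has $\wm$ on the wrong side to give $\|\wm\hat H\|_{2,1}$.
\item The compensator is of the same type. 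Getting from $D_F\hat H\wm$ to $\wm\hat H\wm$ needs a left factor $\wm$, which only appears after averaging over $F$.
\end{itemize}

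The missing idea is a second layer of concentration over $F$. The paper splits the sum over $F$ according to whether $\hat c_F=\|D_F\hat H\wm D_{T^c}\|\le s$ and $\bar a_F=\|D_F\bar H D_{T^c}\|_{2,1}\le t$, see \eqref{F_split_bound}. It then bounds $\P_\mu(\hat c_{I\cap T}>s)$ and $\P_\mu(\bar a_{I\cap T}>t)$ by two further Freedman arguments over the indices in $T$. The second is a union bound over $k\in T^c$ of scalar martingales $\sum_n(\chi_n-\pi_n)\bar H_{nk}^2$, whose increments are at most $\mu_0^2$ with $\mu_0=\|\bar M\odot\hat H\|_{\infty,1}$. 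This is the only place the entry bound enters.

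The three-term minimum comes from nested thresholds. With $\hat a=\|\wm\hat H\|_{2,1}$ and $\hat c=\|\wm\hat H\wm\|$, the paper sets $t^2=\max\{\rho\mu_0^2,n\hat a^2\}$, $s^2=\max\{\rho\hat a^2,n\hat c^2\}$ and $r^2=\max\{\rho t^2,ns^2\}$, then takes $n=3$. The constant is $2\cdot 36\cdot(|T|+2|T^c|)\le 108K$ using $|T^c|\le K/2$. It is not a dilation factor times an unexplained $3/4$.

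Your preference for the self-adjoint dilation is also a wrong turn. The linear term $r/\|\bar M\odot\hat H\|_{\infty,1}$ does not come from linearising $\|BD_J\|$; it comes from the Bernstein regime of the column-norm martingales above. The paper keeps the Gram form $\sum_\ell\chi_\ell b_\ell b_\ell^\tp$ because positivity gives $\sum_\ell\pi_\ell(1-\pi_\ell)\|b_\ell\|^2 b_\ell b_\ell^\tp\preceq\bar a_F^2\sum_\ell p_\ell b_\ell b_\ell^\tp$. So the variance is at most the increment bound times the compensator. With threshold $r^2-\hat c_F^2$, Freedman then gives an exponent of order $r^2/\bar a_F^2$, which involves only column norms. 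In the dilation the summands are indefinite and the variance is of order $\hat c_F^2$, which can be comparable to $r^2$ when $r$ is near $6\|\wm\hat H\wm\|$. For example, take an incoherent block of equal entries, split evenly by $T$, with constant $p$. There Freedman yields only an exponent of constant order, while the theorem's exponent grows with $p$ times the block size.
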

Before we go to the proof, note the last two norms in the bound above are simply the largest column and row norm respectively and thus coincide when $\hat H$ is symmetric, ie. $\|\wm \hat H\|_{2,1} = \max_\ell \|\wm \hat H e_\ell\|_2$ and $\|\hat H \wm \|_{\infty,2}=\max_\ell \| e_\ell^\tp \hat H \wm \|_2$. Further note that $\|\bar M \odot\hat H\|_{\infty,1} = \max_{(i,j):M_{ij}=0}|H_{ij}|$. Thus if $H$ is symmetric and we define $M$ via a coherence cut-off threshold $\mu_0$, ie. $M_{ij}=1$ if $|H_{ij}|>\mu_0$ and zero else, we simply get $\|\bar M \odot\hat H\|_{\infty,1} \leq \mu_0$. In the symmetric case, clearly $\|\wm \hat H\|_{2,1}$ is minimised for the natural choice $\hat H =  \bar M \odot H: =\bar H$. However, this might come at the cost of an increased operator norm $\|\wm \bar H \wm\|$ and one might opt for the other natural choice $\hat H = H$, as in Featured Theorem \ref{featured_theorem}. To get to the statement there, we first use that for $H$ and $\hat H$ symmetric, the constant 108 reduces to 54, and so choosing $\hat H = H = \dico^\tp \dico -\mathbb{I}$ we have for $p\equiv S/K$, 
\begin{align*}
\|\wm \hat H\|^2_{2,1} &= S/K\cdot \max_k \|\dico^\tp \atom_k - e_k \|^2_2 \leq S/K \cdot \|\dico^\tp\|^2 \cdot \|\atom_k\|^2 = S/K \cdot \|\dico\|^2 ,\\
\|\wm \hat H \wm \| & = S/K \cdot \| \dico^\tp \dico -\mathbb{I}\| \leq S/K \cdot \| \dico^\tp \dico\| = S/K \cdot \| \dico\|^2,
\end{align*}
where for the second bound we have used that $\dico^\tp \dico$ is positive semi-definite with largest eigenvalue $\geq 1$.\\
The formulation with $\hat H$ is expected to be most valuable in the case of transform based dictionaries, where the structure can be exploited to find $\hat H$ with maximal column norm similar to $\bar H$ and operator norm much smaller than $H$.

\begin{proof}
We first prove the theorem for the inarguably most interesting case where $H$ is symmetric. Note that for $H$ symmetric and any $S\subseteq [K]$ we have $\| D_I \, D_S H D_{S^c} \, D_I\| = \| D_I \, D_{S^c} H D_{S} \, D_I\|$ so by Proposition~\ref{prop_decoupling} we know that there exists $T\subseteq [K]$ with $|T|\geq |T^c|$ such that
\begin{align*}
    \P_\mu( \| D_I H D_I \| \geq r )&\leq  36 \cdot \P_\mu\left(\| D_I \, D_T H D_{T^c} \, D_I \|\geq \frac{r}{2}\right).  %\notag\\
    %&=36 \cdot \hat \P \left( \max \left\{ \| D_{I \cap T}\, H \,D_{I\cap T^c} \|, \| D_{I \cap T^c}\, H \,D_{I\cap T} \|\right\} \geq \frac{r}{2}\right ).
\end{align*}
Conditioning on the intersection of $I$ with $T$ and applying Lemma~\ref{cond_on_F} with $S=T$ and $g(I) = \| D_I \, D_T H D_{T^c} \, D_I \| $, yields
\begin{align}
\P_{\mu} \left( \| D_I \, D_T H D_{T^c} \, D_I\| \geq r \right ) &= \sum_{\substack{F:F\subseteq T\\ \mu(F)=0}} \P_{\mu} \left( \| D_F\, H D_{T^c}\,D_I \| \geq r \mid I\cap T =F \right ) \cdot \P_\mu(I \cap T = F)\notag \\
&= \sum_{\substack{F:F\subseteq T\\\mu(F)=0}} \P^{A_F}_{\mu} \left( \| D_F\, H \,D_{T^c} D_{I\cup F}\| \geq r \right ) \cdot \P_\mu(I \cap T = F),\notag \\
&= \sum_{\substack{F:F\subseteq T}} \P^{A_F}_{\mu} \left( \| D_F\, H D_{A_F} \,D_I\| \geq r \right ) \cdot \P_\mu(I \cap T = F),\label{total_prob_cond_on_F}
%&= \sum_{\substack{F:F\subseteq T}} \P^{A_F}_{\mu} \left( \| D_F\, H \,D_J \| \geq r \right ) \cdot \P_\mu(I \cap T = F).
\end{align}
where we have used that $\P_\mu(I \cap T = F) = 0 $ if $\mu(F)>0$. Indeed, since $\mu(F)\leq \mu(J)$ whenever $F\subseteq J$, if $\mu(F) > 0$, there exists no $\delta$ with $I(\delta) \cap T = F$ and $\mu(\delta) =\mu(I(\delta)) = 0 $.\\
Let $\bar H = H\odot \bar M$, meaning we set all entries of $H$ corresponding to coherent index pairs to zero. This means that $\bar H, H$ and $\hat H$ coincide for all incoherent index pairs, so in particular,
$$D_F^\tp\, \bar H D_{A_F}=D_F^\tp\, H D_{A_F} = D_F^\tp \, \hat H 
D_{A_F}.$$ 
Further, recall that $D_F$ acts like $R_F^t$ plus zero-padding, so for all matrices $B$ we have $\|D_F B\|_{p,q} = \|R_F^tB\|_{p,q}$. In particular it suffices to bound
$$ \P^{A_F}_{\mu}( \| R^t_F\, H D_{A_F} \,D_I\| \geq r).$$
In order to do this with minimal chaos, for a fixed $F$ we rearrange $p$, $H$, $\bar H$, $\hat H$ and $M$ such that $A_F = \{1,\ldots ,L\}=[L]$ and abbreviate $R_F^t\, H D_{A_F} = B = (B_1, \ldots, B_L, 0,\dots ,0)\in \R^{|F|\times K}$, meaning $\P^{A_F}_\mu$ can be identified with $\P_\mu$ on $\mathcal P([L])$ based on $p|_{[L]}$ and $M_{[L],[L]}$ and our task reduces to bounding
\begin{align*}
\P_{\mu} \left( \| B \,D_I \| \geq r \right) &=\P_{\mu} \left( \|B \,D_I\, B^\tp\| \geq r^2 \right )\\
&=\P_{\mu} \left( \|\sum_{\ell \in I} B_\ell  B_\ell ^\tp\| \geq r^2 \right )=\P_{\mu} \left( \|\sum_{\ell = 1}^L \chi_\ell B_\ell B_\ell^\tp\| \geq r^2 \right ),
\end{align*}
where we recall that $\chi_\ell(\delta) =\delta_\ell$.
Next we rewrite the sum we want to bound as a martingale adapted to the filtration $\mathcal{F}_\ell = \sigma(\chi_1, \ldots, \chi_\ell)$. For any $\ell \in [L]$ we define the following random variables on $\Delta$,
\begin{align}
\pi_{\ell} : = \E_\mu[\chi_\ell \mid \mathcal{F}_{\ell-1}]\quad \mbox{and} \quad 
X_\ell:= (\chi_\ell- \pi_{\ell})\cdot B_\ell B_\ell^\tp.
%\mbox{and}\quad F_\ell&: = \{j\leq \ell : \chi_j=1\} =I \cap [\ell]
\end{align}
Recall that $\tilde \chi_\ell$ acts on $\mathcal P([L])$ by $\tilde \chi_\ell (J) = \chi_\ell(\one_J)$, meaning $\chi_\ell = \tilde \chi_\ell \circ I$. Using Lemma~\ref{cond_on_F} with $g=\tilde \chi_\ell$, $S=[\ell - 1]=\{1,\ldots, \ell-1\}$ and $F=I(\delta) \cap [\ell-1]$ in combination with Lemma~\ref{bound_inclusion_prob} then yields for any $\delta$ with $\mu(\delta)=0$,
%We also set $T_\ell := T\cup [\ell]$,  and $F_\ell = F \cup \{i\leq \ell : \delta_\ell=1\} $ and $A_\ell$ the largest subset of $T_\ell^c$ such that $\mu(F_\ell,A_\ell)\leq \mu_0$, that is 
%\begin{align} 
%A_\ell &:=  \{i\in T_\ell^c \mid \max_{j\in F_\ell} \absip{\atom_i}{\atom_j} \leq \mu_0\}.
%\end{align}
%Combining \eqref{PmuF_reform1} and \eqref{cond_inc_prob} we can then bound $\pi_\ell$ almost surely as
\begin{align*}
\pi_{\ell}(\delta) 
&= \E_\mu\,[ \chi_\ell \mid \chi_1 = \delta_1 \ldots \chi_{\ell-1} = \delta_{\ell-1}] \\
&= \E_\mu\,[ \tilde \chi_\ell \circ I \mid I\cap S = F ]\\
&= \P_\mu\,( \tilde \chi_\ell ( I) \geq 1/2 \mid I\cap S = F) \\
&= \P^{A_F}_\mu(\tilde \chi_\ell (I\cup F) \geq 1/2) = \P^{A_F}_\mu(\chi_\ell \geq 1/2) =\E^{A_F}_\mu(\chi_\ell )\leq p_\ell,
\end{align*}
or in other words $\pi_\ell \leq p_\ell$ almost surely.
This further leads to 
\begin{align*}
\sum_{\ell = 1}^L \chi_\ell B_\ell B_\ell^\tp &= \sum_{\ell = 1}^L (\chi_\ell- \pi_\ell)\cdot B_\ell B_\ell^\tp  +  \sum_{\ell = 1}^L \pi_\ell \cdot B_\ell B_\ell^\tp \preceq \sum_{\ell = 1}^L X_\ell+  \sum_{\ell = 1}^L p_\ell \cdot B_\ell B_\ell^\tp \quad \mbox{(a.s.).} %= \sum_{\ell = 1}^L X_\ell+ B \wm D_{[L]} \wm B^\tp,
\end{align*}
Let $\wm$ be the $K\times K$ diagonal matrix with $\wm_{ii}=\sqrt{p_i}$, then we have $\sum_{\ell = 1}^L p_\ell \cdot B_\ell B_\ell^\tp = B\wm D_{[L]} \wm B^\tp$, which we can bound as
\begin{align*}
\| B \wm D_{[L]} \wm B^\tp\| =\| B \wm D_{[L]} \|^2&= \| D_F H \wm D_{A_F}\|^2 \\
&= \| D_F \hat H \wm D_{A_F}\|^2\leq \| D_F \hat H \wm D_{T^c}\|^2  = :\hat c_F^2.
\end{align*}
This allows us to further estimate
\begin{align*}
\P_{\mu} \left( \|BD_I \| \geq r \right) &=\P_{\mu} \left( \|\sum_{\ell = 1}^L \chi_\ell B_\ell B_\ell^\tp\| \geq r^2 \right )\leq \P_{\mu} \left( \|\sum_{\ell = 1}^LX_\ell\|  \geq r^2 - \hat c_F^2 \right ).
\end{align*}
By construction $(Y_j)_j$ with $Y_j =  \sum_{\ell = 1}^j X_\ell$ is a martingale with respect to the filtration $\mathcal{F}_j$, so we can use Freedman's inequality to estimate $\P_\mu(\| \sum_{\ell = 1}^L X_\ell\| \geq t)$.
Concretely, we can bound the difference sequence $X_\ell$ almost surely as
\begin{align}
\| X_\ell\| = |\chi_\ell- \pi_\ell| \cdot  \|B_\ell\|_2^2 \leq \max_{\ell \in A_F} \|B_\ell\|_2^2 &= \|D_F H D_{A_F}\|^2_{2,1}\notag \\
&= \|D_F \bar H D_{A_F}\|^2_{2,1} \leq \|D_F \bar H D_{T^c}\|^2_{2,1}=:\bar a^2_F .
\end{align}
Further, we get for the predictable quadratic variation process
\begin{align*}
\sum_{\ell=1}^L \E_\mu[ X_\ell^2 \mid \mathcal{F}_{\ell-1}]& =\sum_{\ell=1}^L  \E_\mu[(\chi_\ell- \pi_\ell)^2 \cdot B_\ell B_\ell^\tp B_\ell B_\ell^\tp \mid \mathcal{F}_{\ell-1}] \\
&= \sum_{\ell=1}^L \E_\mu[\chi_\ell- 2\chi_\ell \cdot \pi_\ell + \pi_\ell^2 \mid \mathcal{F}_{\ell-1}] \cdot \|B_\ell\|_2^2 \cdot B_\ell B_\ell^\tp \\
&= \sum_{\ell=1}^L  (\pi_\ell - \pi_\ell^2) \cdot \|B_\ell\|_2^2 \cdot B_\ell B_\ell^\tp \\
%&\preceq  \sum_{\ell=1}^L  p_\ell \cdot \|B_\ell\|_2^2 \cdot B_\ell B_\ell^\tp  \\
&\preceq  \max_{\ell \in [L]}  \|B_\ell\|_2^2  \cdot \sum_{\ell=1}^L  p_\ell \cdot B_\ell B_\ell^\tp = \|B D_{A_F}\|^2_{2,1}
\cdot B \wm D_{A_F} \wm B^\tp \quad \mbox{(a.s.)},
\end{align*}
and therefore
\begin{align}
\| \sum_{\ell=1}^L \E_\mu[X_\ell^2 \mid \mathcal{F}_{\ell-1}]\| &\leq \|B D_{A_F}\|^2_{2,1}
\cdot \| B \wm D_{A_F}\|^2 \notag \\
&= \|D_F \bar H D_{A_F}\|^2_{2,1}  \cdot \| D_F \hat H \wm D_{A_F} \|^2 \leq \bar a^2_F \cdot \hat c_F^2 \quad \mbox{(a.s.)}.\label{quad_var_bound}
\end{align}
Thus using Freedman's inequality and noting that $|F|\leq |T|$, we get
\begin{align}
  \P_{\mu} \left( \| B \,D_{I} \| \geq r \right) &\leq \P_\mu ( \| \sum_{\ell = 1}^L X_\ell\| \geq r^2 - \hat c_F^2 ) \notag \\
  &\leq  | F | \cdot \exp\left( \frac{-(r^2 - \hat c_F^2)^2}{2 \cdot \bar a^2_F \cdot \hat c_F^2 + \frac{2}{3} \cdot \bar a^2_F\cdot (r^2 - \hat c_F^2) }\right) 
\leq  | T | \cdot \exp\left( \frac{-(r^2 - \hat c_F^2)^2}{ \bar a^2_F \cdot ( r^2 + \hat c_F^2) }\right).\notag %\label{firstexponential}
\end{align}
Substituting the bound above back into \eqref{total_prob_cond_on_F} after partitioning the sets $F$ conveniently, yields
\begin{align}
\P_{\mu} \left( \| D_I \, D_T H D_{T^c} \, D_I\| \geq r \right )& = \sum_{\substack{F:F\subseteq T}} \P^{A_F}_{\mu} \left( \| R_F^t\, H D_{A_F} \,D_I\| \geq r \right ) \cdot \P_\mu(I \cap T = F)\notag \\
&\hspace{-3cm}\leq \sum_{\substack{F:F\subseteq T\\ \hat c_F \leq s, \bar a_F \leq t}}  | T | \cdot \exp\left( \frac{- (r^2 - s^2)^2}{t^2 \cdot ( r^2 + s^2) }\right)\cdot  \P_\mu(I \cap T = F) \notag \\
&\qquad \qquad 
+ \sum_{\substack{F:F\subseteq T\\\hat c_F >s}} 1 \cdot \P_\mu(I \cap T = F) + \sum_{\substack{F:F\subseteq T\\\bar a_F > t}} 1 \cdot \P_\mu(I \cap T = F)\notag \\
&\hspace{-3cm}\leq  | T | \cdot \exp\left( \frac{- (r^2 - s^2)^2}{t^2 \cdot ( r^2 + s^2) } \right) +  \P_\mu ( \hat c_{I\cap T} >s )  + \P_\mu ( \bar a_{I\cap T}> t ). \label{F_split_bound}
%&\qquad \qquad + \P_\mu ( \| D_{I\cap T} H \wm D_{T^c} \|>s )  + \P_\mu ( \|D_{I\cap T}\bar H D_{T^c}\|_{2,1}> t )
\end{align}
To estimate the last two terms, we rearrange $p$, $H$, $\bar H$ and $M$ again, to now have $T= \{1,\ldots ,N\}$, for $N=|T|$.
To estimate
\begin{align*}
\P_\mu ( \hat c_{I\cap T} >s ) = \P_\mu(\| D_{I\cap T}\hat H \wm D_{T^c} \| > s )=\P_\mu(\| D_I D_T \hat H \wm R_{T^c} \| > s ),
\end{align*}
we now abbreviate $B = (D_T \hat H \wm R_{T^c})^\tp = (B_1,\ldots ,B_N ,0\ldots,0)\in \R^{(K-N) \times K}$, and again rewrite the expression we want to bound as a martingale. Using the random variables $\chi_n$, the filtration $\mathcal{F}_n = \sigma(\chi_1, \ldots, \chi_n)$, as well as $\pi_n=\E_\mu[\chi_n\mid\mathcal{F}_{n-1}]$, where $\pi_n \leq p_n$ (a.s.), and $X_n=(\chi_n -\pi_n) B_nB_n^\tp$, then yields
\begin{align*}
\P_\mu ( c_{I\cap T} >s )&=\P_\mu (\| D_I B^t \| > s ) =\P_{\mu} (\|B D_I B^t\|> s^2) \\
&= \P_\mu\left( \|\sum_{n = 1}^N \chi_n B_n B_n^\tp \| \geq s^2 \right )\\
&\leq \P_\mu \left( \|\sum_{n = 1}^N (\chi_n -\pi_n) B_n B_n^\tp \| \geq s^2 - \|\sum_{n = 1}^N \pi_n B_n B_n^\tp\| \right ) \\
&\leq \P_\mu \left( \|\sum_{n = 1}^N X_n\| \geq s^2 - \| B \wm \|^2 \right )\leq \P_\mu \left( \|\sum_{n = 1}^N X_n\| \geq s^2 - \| \wm \hat H \wm \|^2 \right ).
\end{align*}
Similar to before we next bound the increments $X_n$ and the quadratic variation process as
\begin{align*}
&\| X_n\| \leq \|B_n\|_2^2 \leq \max_{n \in T} \|R_{T^c}^\tp \wm \hat H D_T e_n\|_2^2 \leq \max_{n} \|\wm\hat H e_n\|_2^2 = \|\wm\hat H\|_{2,1}^2\\
\mbox{and} \quad &\| \sum_{n=1}^N \E[ X_n^2 \mid \mathcal{F}_{n-1}] \| \leq \max_{n \in T}  \|B_n\|_2^2  \cdot \| \sum_{n=1}^N \pi_n B_n B_n^\tp \| 
\leq \|\wm\hat H\|_{2,1}^2 \cdot \|\wm \hat H \wm\|^2 \quad \mbox{(a.s.)},
\end{align*}
so abbreviating $\hat a = \|\wm\hat H\|_{2,1}$ and $\hat c = \|\wm \hat H \wm\|$, Freedman's inequality yields,
\begin{align}
\P_\mu ( c_{I\cap T} >s )=\P_\mu ( \| D_{I} D_T H \wm R_{T^c} \|>s ) &\leq  | T^c | \cdot \exp\left( \frac{- (s^2 - \hat c^2)^2}{ \hat a^2 \cdot ( s^2 + \hat c^2) }\right). \label{c_bound}
\end{align}
% \begin{align}
% \P_\mu ( \bar c_{I\cap T} >s )=\P_\mu ( \| D_{I} D_T \bar H \wm D_{T^c} \|>s ) &\leq  | T^c | \cdot \exp\left( \frac{-(s^2 - \bar c^2)^2}{\bar a^2 \cdot ( s^2 + \bar c^2) }\right).
% \end{align}
Finally, to estimate $\P (\bar a_{I\cap T} >t ) = \P(\|D_{I}D_T \bar H D_{T^c}\|_{2,1} >t)$ we first bound
\begin{align*}
    \P_\mu(\|D_{I}D_T \bar H D_{T^c}\|_{2,1} >t) &= \P_\mu(\max_{k \in T^c} \|D_{I}D_T \bar H e_k\|_2 >t) \leq \sum_{k \in T^c} \P_\mu (\|D_{I}D_T \bar H e_k\|_2 >t).
\end{align*}    
We then set $X_n = (\chi_n-\pi_n)\bar H_{nk}^2$, meaning $|X_n|\leq \|\bar H \|_{\infty, 1}=: \mu_0^2$, and get
\begin{align*}
 \sum_{n=1}^N \E[ X_n^2 \mid \mathcal{F}_{n-1}] \leq  \sum_{n=1}^N p_n \bar H_{nk}^4 \leq \mu_0^2\cdot  \sum_{n=1}^N p_n \bar H_{nk}^2 = \mu_0^2 \cdot \|\wm \bar H e_k \|_2^2 \leq  \mu_0^2 \cdot\| \wm \bar H\|^2_{2,1}\quad \mbox{(a.s.)}.
\end{align*}
Using the usual procedure and the bound $\| \wm \bar H\|_{2,1} \leq \| \wm \hat H\|_{2,1} =\hat a$, then yields
\begin{align*}
 \P_\mu (\|D_{I}D_T \bar H e_k\|_2 >t) &= \P_\mu \left(\sum_{n=1}^N \chi_n \bar H_{nk}^2 >t^2\right) \\
  &= \P_\mu \left(\sum_{n=1}^N  X_n + \sum_{n=1}^N \pi_n \bar H_{ni}^2 >t^2\right)\\
  &\leq \P_\mu \left(\sum_{n=1}^N  X_n >t^2 - \hat a^2 \right)\leq \exp\left( \frac{-(t^2 - \hat a^2)^2}{ \mu_0^2 \cdot ( t^2 +  \hat a^2) }\right),
\end{align*}
or in other words 
\begin{align}
   \P (\bar a_{I\cap T} >t ) = \P_\mu(\|D_I D_T \bar H D_{T^c}\|_{2,1} >t) \leq |T^c| \cdot  \exp\left( \frac{-(t^2 - \hat a ^2)^2}{ \mu_0^2 \cdot ( t^2 +  \hat a^2) }\right).\label{a_bound}
\end{align}
Plugging the bound above as well as the bound for $\P_\mu ( \hat c_{I\cap T} >s )$ in \eqref{c_bound} into \eqref{F_split_bound} then yields for $r,s,t$ with $\hat c < s < r$ and $t > \hat a$
\begin{align}
&\P_{\mu} \left( \| D_{I}\,D_T H D_{T^c}\,D_{I} \| \geq r \right ) \notag \\
&\qquad \leq  | T | \cdot \exp\left( \frac{-(r^2 - s^2)^2}{ t^2 \cdot ( r^2 + s^2) }\right) +  | T^c | \cdot \exp\left( \frac{-(s^2 - \hat c^2)^2}{ \hat a^2 \cdot ( s^2 +  \hat c^2) }\right)+|T^c| \cdot  \exp\left( \frac{-(t^2 - \hat a ^2)^2}{ \mu_0^2 \cdot ( t^2 + \hat a^2) }\right),\notag
%&\qquad \leq  \frac{3K}{2} \max\left\{\exp\left( \frac{- (r^2 - s^2)^2}{t^2 \cdot ( r^2 + s^2) }\right), \exp\left( \frac{- (s^2 - \bar c^2)^2}{ \bar a^2 \cdot ( s^2 +  \bar c^2) }\right), \exp\left( \frac{- (t^2 - \bar a^2)^2}{ \mu_0^2 \cdot ( t^2 + \bar a^2) }\right)\right\}, 
%&\qquad \qquad + \P_\mu ( \| D_{I\cap T} H \wm D_{T^c} \|>s )  + \P_\mu ( \|D_{I\cap T}\bar H D_{T^c}\|_{2,1}> t )
\end{align}
and it only remains to choose $r,s,t$ wisely. To do that observe that whenever $r > \hat c$ we can find $n>1$ and $\rho>0$ such that
\begin{align*}
    t^2 &:=   \max\{ \rho \mu_0^2, n \hat a^2\} \notag \\
    s^2 &:=   \max\{ \rho \hat a^2, n\hat c^2\} \notag \\
    r^2 &:=   \max\{ \rho  t^2, n s^2\}   = \max\{\rho^2  \mu_0^2, \rho n \hat a^2, n^2 \hat c^2\}. %\label{r_cond} %
\end{align*}
We then have 
\begin{align*}
\frac{(r^2 - s^2)^2}{ t^2 \cdot ( r^2 +  s^2) }& = \frac{r^2\cdot \frac{n-1}{n} + r^2\cdot \frac{1}{n} - s^2}{ t^2}  \cdot  \frac{r^2 - s^2}{r^2 +  s^2} \\
&\geq \frac{r^2 (n-1)}{ n t^2}  \cdot  \frac{n-1}{n+1} \geq \rho \cdot \frac{ (n-1)^2}{n(n+1)} = : \rho \cdot \kappa(n),
\end{align*}
meaning all three exponentials are bounded by $e^{-\rho \kappa(n)}$. 
In particular we can set
$$\rho =  \min\left\{ \frac{r}{\mu_0}, \frac{r^2}{n \hat a^2}\right\} ,$$
meaning that for all $r>n\hat c$ we have
\begin{align}
&\P_{\mu} \left( \| D_I \,D_T H D_{T^c} \,D_I \| \geq r \right ) \leq (K + |T^c|) \exp\left( - \min\left\{ \frac{r}{n \mu_0}, \frac{r^2}{n^2 \hat a^2}\right\} \cdot \frac{(n-1)^2}{n+1}\right). \label{quasi_final_bound_Tc}
\end{align}
Since $|T^c| \leq K/2$  we finally get for all $r \geq 2 n \hat c $
\begin{align*}
    \P_{\mu} \left( \| D_I H D_I \| \geq r \right ) &
    \leq 54 K \exp\left( - \min\left\{ \frac{r}{2n \mu_0}, \frac{r^2}{4n^2 \hat a^2}\right\} \cdot \frac{(n-1)^2}{n+1}\right).
\end{align*}
To get to the bound for the non-symmetric case, note that
$\|A\| = \|A^t\|$, so
\begin{align*}
    &\P_\mu\left( \max \{ \| D_I \, D_T \hol D_{T^c} , D_I \|, \| D_I \, D_{T^c} \hol D_{T} \, D_I \| \}\geq r\right)\\
    &\hspace{4cm}\leq \P_\mu\left( \| D_I \, D_T \hol D_{T^c} \, D_I \|\geq r\right)+\P_\mu\left( \| D_I \, D_T \hol^\tp D_{T^c} \, D_I \|\geq r\right).
\end{align*}
Finally applying \eqref{quasi_final_bound_Tc} to $H^\tp$, uniformly bounding the resulting exponents and setting $n=3$ yields the desired result.
\end{proof}

%\newpage
\appendix

%%%%%%%%%%%%%%
\noindent
\section{Proof of Lemma~\ref{cond_on_F} and \ref{bound_inclusion_prob}}\label{sec:proof_lemma}

For the proofs it will be a little bit more intuitive to work with the sets $I(\delta)$ rather than with $\delta$ directly. We therefore extend the multi-index notation to sets and define
$p^J:=\prod_{i\in J} p_i$ for $J\subseteq [K]$. This means that if we have a disjoint union of two sets $J,F$ meaning $J\cap F=\emptyset$, which we denote by $J\dcup F$, we get $p^{J\dcup F} = p^J \cdot p^F$. Denoting the set complement as $J^c =[K] \setminus J$, leads to
\begin{align*}
    \P(I = J) = p^J(\one -p)^{J^c} \quad
   \mbox{and} \quad  \P(A) = \sum_{\delta \in A} p^{I(\delta)} (\one -p)^{I(\delta)^c} \quad \mbox{for all } A\subseteq \Delta.
\end{align*}  
Further, recall that $\mu(\delta) = \mu(I(\delta))$ and that we call a set incoherent if $\mu(J)=\one_J^t M \one_J=0$, and a set $J$ or index $k$ incoherent to a set $F$ if $\mu(F,J)=\one_F^t M \one_J=0$ and $\mu(F,k)=\mu(F,\{k\})= 0$ respectively. Whenever $\mu(\cdot,\cdot) > 0$ we refer to the set or pair as coherent.\\

\begin{proof}[of Lemma \ref{cond_on_F}] We want to show that for
 a real-valued random variable $g$ on $\mathcal{P}([K])$, a set $F\subseteq S\subseteq [K]$ with $\mu(F) = 0$ and $A_F = \{ k \in S^c \mid \mu(F,k)=0\}$, we have for all $r\in \R$
\begin{align*}
    \P_\mu(g(I) \geq r \mid I\cap S = F) = \P_\mu^{A_F}(g(I\cup F)\geq r).
\end{align*}
Let $C_F = (A_F \cup S)^c= \{k\in S^c: \mu(F,k) > 0\}$ be the set of indices in $S^c$ that are coherent with the selected set $F$ and thus cannot be added, meaning $\P_\mu(I=J\cup F)=0$ for $J\subset S^c$ unless $J\cap C_F =\emptyset$. 
We first show that 
\begin{align*}
\P(\underbrace{g(I)\geq r, I\cap S = F, \mu=0}_{=:B\subseteq \Delta}) = \underbrace{p^F  (\one-p)^{S\setminus F} (\one -p)^{C_F}}_{=:\kappa(F)} \cdot\; \P^{A_F}(g(I\cup F) \geq r, \mu = 0).
\end{align*}
Now note that we can only have $I(\delta) \cap S = F$ if there exists some set $J$ with $J\subseteq S^c$ such that $I(\delta)= F \dcup J$ or equivalently there exists some $\eta$ with $I(\eta) \subseteq S^c$ such that $\delta =\one_F + \eta$.
\begin{align*}
   B&= \{\delta: g(I(\delta)) \geq r, I(\delta) \cap S = F, \mu(I(\delta)) = 0 \}\\
   &= \{ \one_F + \eta: g(F\cup I(\eta))\geq r, I(\eta)\subseteq  S^c, \mu(\one_F+ \eta) =0\}.
\end{align*}
Next note that for $F$ with $\mu(F) =\mu(\one_F)=0$ we have
\begin{align*}
   0 = \mu(\one_F + \eta) = \mu(\one_F) + 2\mu(\one_F,\eta) + \mu(\eta) = 2\mu(F,I(\eta)) + \mu(\eta) 
\end{align*}
if and only if $I(\eta)\subseteq A_F$ and $\mu(\eta) = 0$, so we get 
\begin{align*}
   B&= \{ \one_F + \eta: g(F\cup I(\eta))\geq r, I(\eta)\subseteq  S^c, \mu(\one_F+ \eta) =0\}\\
   &= \{ \one_F + \eta: g(F\cup I(\eta))\geq r, I(\eta)\subseteq  A_F, \mu(\eta) =0\}.
\end{align*}
Finally, note that if $J \subseteq  A_F$ then $(F \dcup J)^c = (S\dcup A_F \dcup C_F)\setminus (F\dcup J) = (S\setminus F) \dcup (A_F\setminus J) \dcup C_F$, so we get
\begin{align*}
\P(B) &= \sum_{\substack{\delta: g(F\cup I(\delta))\geq r\\ I(\delta)\subseteq  A_F,\\ \mu(\delta) =0 }} p^{F \cup I(\delta)} (\one-p)^{[F \cup I(\delta)]^c}\\
&= \sum_{\substack{\delta: g(F\cup I(\delta))\geq r\\ I(\delta)\subseteq  A_F,\\ \mu(\delta) =0 }} p^{F} p^{I(\delta)} (\one-p)^{S\setminus F} (\one-p)^{A_F\setminus I(\delta)}(\one-p)^{C_F}\\
&= p^{F}(\one-p)^{S\setminus F} (\one-p)^{C_F} \cdot  \sum_{\substack{\delta: g(F\cup I(\delta))\geq r\\ I(\delta)\subseteq  A_F,\\ \mu(\delta) =0 }}  p^{I(\delta)}  (\one-p)^{A_F\setminus I(\delta)}\\
& =\kappa(F) \cdot \P^{A_F}( g(F\cup I) \geq r, \mu= 0).
\end{align*}
Setting $g = 1$ and $r=0$ immediately yields $\P(I\cap S = F, \mu=0) = \kappa(F) \cdot \P^{A_F}(\mu = 0)>0$, since $\mu(0) = 0$ and $I(0) = \emptyset \subseteq A_F $, so we get 
\begin{align*}
    \P_\mu(g(I) \geq r \mid I\cap S = F) %&= \frac{\P_\mu (g(I) \geq r, I\cap S = F )}{\P_\mu (I\cap S = F )} \\
    &=\frac{\P (g(I) \geq r, I\cap S = F, \mu = 0)}{\P(I\cap S = F, \mu=0)}\\
     &=\frac{\kappa(F) \cdot \P^{A_F}( g(F\cup I) \geq r, \mu= 0)}{\kappa(F) \cdot \P^{A_F}(\mu = 0)}= \P_\mu^{A_F}( g(F\cup I) \geq r).
\end{align*}
\end{proof}
%
% \begin{lemma}
% Let $\chi_\ell: \Delta\to \R, \delta \to \delta_\ell$ then
% for any $S\subseteq [K]$ we have
% \begin{align*}
%     \E^S_\mu(\chi_\ell) = \P^S_\mu(\chi_\ell = 1) = \P^S_\mu(\ell \in I) \leq p_\ell.
% \end{align*}
% \end{lemma}
\begin{proof}[of Lemma~\ref{bound_inclusion_prob}]
We want to bound the inclusion probabilities of $\P_\mu^S$ by those of $\P$, meaning for $\chi_\ell: \Delta\to \R, \delta \mapsto \delta_\ell$ and any $S\subseteq [K]$ we have
\begin{align*}
    \E^S_\mu(\chi_\ell) = \P^S_\mu(\chi_\ell = 1) = \P^S_\mu(\ell \in I) \leq p_\ell.
\end{align*}
For $\ell \notin S$ we have $\P_\mu^S (\ell \in I)=0 \leq p_\ell$. Let $A= \{j \in S : M(\ell,j) = 0 \}$ and $C = S\setminus (A\cup \{\ell\})$. Now if $\delta$ with $I(\delta)\subseteq S$, $\delta_\ell =1$ and $\mu(\delta) =0$ we need that $I(\delta) \cap C =\emptyset$, or in other words we can write $\delta$ as $\delta = \eta + \one_{\ell}$ where $I(\eta) \subseteq A$, meaning
\begin{align*}
B^1_\ell &= \{\delta: \mu(\delta) =0, I(\delta) \subseteq S, \delta_\ell = 1\}\\
&= \{\delta: \mu(\delta) =0, I(\delta) \subseteq A\cup\{\ell\},\delta_\ell = 1\}= \{\eta + \one_{\ell}: \mu(\eta) =0, I(\eta) \subseteq A\}.
\end{align*}
On the other hand we have more incoherent subsets of $S$ than incoherent subsets of $A\cup \{\ell\}$, which we can in turn divide into those that contain $\ell$ and those that do not, meaning
\begin{align*}
B &= \{\delta: \mu(\delta) =0, I(\delta) \subseteq S\}\\
&\supseteq \{\delta: \mu(\delta) =0, I(\delta) \subseteq A\cup\{\ell\}\\
&= \{\delta: \mu(\delta) =0, I(\delta) \subseteq S, \delta_\ell = 1\} \dcup \{\delta: \mu(\delta) =0, I(\delta) \subseteq S, \delta_\ell = 0\} \\&=B^1_\ell \dcup \{\eta : \mu(\eta) =0, I(\eta) \subseteq A\}=: B^1_\ell \dcup B^0_\ell.
\end{align*}
Next note that 
\begin{align*}
    \P(B^1_\ell) &= \sum_{\eta:\mu(\eta)=0, I(\eta)\subseteq A } \: p^{I(\eta)}\cdot p_\ell \cdot (\one-p)^{A\setminus I(\eta)} \cdot (\one-p)^{S^c \cup C} \\
    &= \frac{p_\ell}{1-p_\ell} \cdot \sum_{\eta:\mu(\eta)=0, I(\eta)\subseteq A } \: p^{I(\eta)} \cdot (\one-p)^{A\setminus I(\eta)} \cdot (1-p_\ell) \cdot (\one-p)^{S^c \cup C} = \frac{p_\ell}{1-p_\ell} \cdot \P(B^0_\ell),
\end{align*}
leading to 
\begin{align*}
    \P^S_\mu(\ell \in I) = \frac{\P(B^1_\ell)}{\P(B)}\leq \frac{\P(B^1_\ell)}{\P(B^1_\ell) + \P(B^0_\ell)} = p_\ell
\end{align*}
\end{proof}

%\acks{This work was supported by the Austrian Science Fund (FWF) under Grant no.~Y760.}
% We would like to thank the anonymous reviewers for constructive criticism (hopefully)
% or scientific misconduct by making us cite all their irrelevant papers (likely). 
% We are immensely grateful to the designated
% proof-reading bunny for finding many mistakes.

%%% put bibliography path here 

\end{document}